\newtheorem{theorem}{Theorem}[section]
\newtheorem*{theorem*}{Theorem}
\newtheorem{lemma}[theorem]{Lemma}
\newtheorem*{lemma*}{Lemma}
\newtheorem{proposition}[theorem]{Proposition}
\newtheorem*{proposition*}{Proposition}
\newtheorem*{corollary*}{Corollary}
\newtheorem*{claim*}{Claim}
\newtheorem*{fact*}{Fact}
\newtheorem*{conjecture*}{Conjecture}
\theoremstyle{definition}
\newtheorem{definition}[theorem]{Definition}
\newtheorem*{definition*}{Definition}
\newtheorem{example}[theorem]{Example}
\newtheorem*{example*}{Example}
\newtheorem{remark}[theorem]{Remark}
\newtheorem*{remark*}{Remark}
\newtheorem*{question*}{Question}
\newtheorem*{assumption*}{Assumption}
\numberwithin{equation}{section}
\newcommand{\abs}[1]{\left\lvert#1\right\rvert}
\DeclareMathOperator{\Hom}{Hom}
\DeclareMathOperator{\id}{id}
\DeclareMathOperator{\ord}{ord}
\DeclareMathOperator{\Aut}{Aut}
\DeclareMathOperator{\pr}{pr}
\let\Im\relax
\DeclareMathOperator{\Im}{Im}
\newcommand{\C}{{\mathbb C}}
\newcommand{\R}{{\mathbb R}}
\newcommand{\Q}{{\mathbb Q}}
\newcommand{\Z}{{\mathbb Z}}
\newcommand{\Half}{{\mathbb H}}
\newcommand{\conj}[1]{{\overline{#1}}}
\newcommand{\OO}[1]{{\mathcal O_{#1}}}
\newcommand{\unit}[1]{{{\mathcal O}_{#1}^{\times}}}
\newcommand{\unitp}[1]{{{\mathcal O}_{#1}^{\times, +}}}
\address{graduate school of mathematical sciences, the university of tokyo, 3-8-1 komaba, meguro, tokyo 153-8914, japan.}
\email{shuho@ms.u-tokyo.ac.jp}
\subjclass[2020]{32J18; 53C55}
\begin{document}
\title
[
Several complex structures on the Oeljeklaus-Toma manifolds
]
{
Several complex structures on the Oeljeklaus-Toma manifolds
}
\author{Shuho Kanda}
\date{}
\maketitle
\begin{abstract}
    We investigate complex structures on the Oeljeklaus-Toma manifolds. 
The Oeljeklaus-Toma manifolds are defined using complex embeddings of number fields. 
By replacing these embeddings with their conjugates, 
one obtains other manifolds that share the same underlying differential structure. 
In this paper, 
we give an algebraic description of when such manifolds are biholomorphic. 
As a simple application, 
we obtain compact non-K\"{a}hler manifolds of dimension $2t+1$ 
that admit $2^t$ different rigid complex structures. 
\end{abstract}

\setcounter{tocdepth}{1}
\tableofcontents

\section{Introduction}

\subsection{Background}
The Oeljeklaus-Toma (OT, for short) manifolds, 
introduced by Oeljeklaus and Toma in \cite{OT05}, 
form a class of compact non-K\"{a}hler manifolds 
including Inoue-Bombieri surfaces \cite{Ino74} 
of type $S_M$. 
They are constructed from arithmetic data, 
which allows various geometric properties to be encoded in algebraic conditions. 
Moreover, 
OT manifolds are constructed as quotients of solvable Lie groups by lattices 
\cite{Kas13b}, 
which makes computations of many geometric quantities relatively tractable. 
For these reasons, 
OT manifolds have played a central role in Hermitian geometry, 
particularly in the study of non-K\"{a}hler geometry. 
For various properties of OT manifolds, 
we refer to \cite{APV20}, \cite{Kas23}, \cite{OV11}.
For special Hermitian metrics admitted by OT manifolds, 
see \cite{ADOS24}, \cite{DV23}, \cite{FV24}, \cite{Oti22}.
In \cite{Kan25}, 
a certain characterization of OT manifolds 
within the framework of locally conformally K\"{a}hler geometry is given. 

In what follows, 
we describe the arithmetic data used in the construction of OT manifolds. 
Let $K$ be a number field of degree $n = [ K : \Q]$ 
and of signature $(s,t)$, 
that is, 
$K$ admits $s$ real embeddings 
$\sigma_1, \ldots ,\sigma_s : K \hookrightarrow \R$, 
and $2t$ complex embeddings
$\sigma_{s+1}, \ldots ,\sigma_{s+2t} : K \hookrightarrow \C$. 
Since the complex embeddings come in conjugate pairs, 
we can label the indices so that $\sigma_{s+i} = \conj{\sigma_{s+t+i}}$ 
for all $1 \le i \le t$. 
We always assume that $s,t \ge 1$. 
Taking a free subgroup $U \subset \unit{K}$ of rank $s$
that satisfies appropriate conditions 
(called \emph{admissible}, Definition \ref{Def:admissible}), 
we can define a compact complex manifold $X(K,U)$ of 
complex dimension $s+t$. 
We call this manifold an \emph{OT manifold} of type $(s,t)$. 
OT manifolds of type $(1,1)$ are known as Inoue-Bombieri surfaces. 

Although the notation $X(K, U)$ is used in all references, 
the construction in fact depends also on a labeling of the embeddings 
$\{ \sigma_1, \ldots, \sigma_{s+2t} \}$. 
For example, 
by permuting the embeddings $\sigma_{s+1}$ and $\sigma_{s+t+1}$, 
one obtains a complex manifold which is diffeomorphic 
but not necessarily biholomorphic to the original one. 
The additional data involved in the construction, 
apart from $K$ and $U$, 
essentially corresponds to elements of the following set: 
\[
    \mathcal{T}_{K} \coloneqq \left\{ T \subset \{ \sigma_{s+1}, \ldots ,\sigma_{s+2t} \} 
    \, \bigg| \,  | T \cap \{\sigma_{s+i},\sigma_{s+t+i}\} |=1
    \; \text{for all $1 \le i \le t$}
    \right\}, 
\]
which is determined uniquely by $K$. 
The elements of $\mathcal{T}_{K}$ represent 
the complex embeddings used to construct 
an OT manifold from $(K,U)$. 
In this paper, 
we explicitly include the additional data $T \in \mathcal{T}_{K}$, 
and denote the OT manifold 
(more precisely, its biholomorphism class) by $X(K,U,T)$. 
Since $\rvert \mathcal{T}_{K} \lvert=2^t$, 
at most $2^t$ distinct complex manifolds 
can be obtained from the data $(K,U)$ when constructing OT manifolds. 
In the case of Inoue-Bombieri surfaces, 
that is, 
when $(s,t)=(1,1)$, 
it is shown in \cite{Ino75}, 
which is only available in Japanese, 
that these $2^t=2$ complex structures are not biholomorphic. 
It is further shown that the complex structures on 
an Inoue-Bombieri surface are limited to these two. 

\subsection{Result}
The purpose of this paper is to give an algebraic description 
of which of these $2^t$ complex structures are biholomorphic to each other. 
The result is described as follows: 

\begin{theorem}
[= Theorem \ref{Thm:main_theorem}]
    Let $K$ be a number field of signature $(s,t)$ and $U$ be 
    an admissible subgroup such that 
    $\Q(U)=K$. 
    Define a group 
    \[
    A_U \coloneq \left\{ \rho \in \Aut(K/\Q) \,\Big| \, \rho(U)=U \right\} 
    \]
    and an action $A_U \curvearrowright \mathcal{T}_{K}$ by 
    \[
    \rho \cdot T \coloneqq \{ \sigma \circ \rho \mid \sigma \in T \} 
    \]
    for $T \in \mathcal{T}_K$. 
    Then, 
    for $T, T' \in \mathcal{T}_{K}$, 
    we have $X(K,U,T)=X(K,U,T')$ 
    if and only if 
    there exists $\rho \in A_U$ such that $\rho \cdot T = T'$. 
    In particular, 
    there are $\lvert \mathcal{T}_{K} / A_U \rvert$
    biholomorphism classes of complex manifolds 
    in the set
    $\{ X(K,U,T) \mid T \in \mathcal{T}_{K} \}$. 
\end{theorem}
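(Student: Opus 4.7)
Recall that $X(K,U,T)$ is realized as $(\Half^s \times \C^t)/\Gamma_T$, where the lattice $\Gamma_T = \OO K \rtimes U$ acts through the embedding $\ell^T\colon a \mapsto (\sigma_1(a),\ldots,\sigma_s(a),(\tau(a))_{\tau \in T})$, with $\OO K$ acting by translations and $U$ by diagonal multiplication. I would prove the two directions of the criterion separately.

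\emph{Sufficiency.} Suppose $\rho \in A_U$ and $\rho \cdot T = T'$. Precomposition $\sigma \mapsto \sigma \circ \rho$ permutes the real embeddings among themselves and sends $T$ bijectively onto $T'$. The induced coordinate permutation $\Phi \colon \Half^s \times \C^t \to \Half^s \times \C^t$ is then equivariant for the two lattice actions via the abstract isomorphism $\Gamma_T \to \Gamma_{T'}$ given by $(a,u) \mapsto (\rho^{-1}(a), \rho^{-1}(u))$: translations by $a \in \OO K$ become translations by $\rho^{-1}(a)$, and $U$-multiplications by $u$ become $U$-multiplications by $\rho^{-1}(u)$. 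The assumption $\rho(U) = U$ is precisely what guarantees $\rho^{-1}(u) \in U$, so $\Phi$ descends to a biholomorphism $X(K,U,T) \to X(K,U,T')$.

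\emph{Necessity.} Lift a given biholomorphism $f$ to $\tilde f \colon \Half^s \times \C^t \to \Half^s \times \C^t$. Since $\Half^s$ is biholomorphic to the bounded polydisc, Liouville forces any holomorphic map $\C^t \to \Half^s$ to be constant; hence $\tilde f(z,w) = (\varphi(z), \psi(z,w))$ with $\varphi \in \Aut(\Half^s)$. Equivariance of $\tilde f$ with the lattice actions produces an abstract isomorphism $\phi\colon \Gamma_T \to \Gamma_{T'}$; since $\OO K$ is intrinsically characterised (e.g.\ as the nilpotent radical) it is preserved by $\phi$. The $\R^s$-translational part of the $\OO K$-equivariance then forces $\varphi$ to preserve the cusp at infinity in each $\Half$-factor, hence to be a factor permutation composed with a diagonal real-affine map, while the combined $\OO K$-translation and $U$-scaling equivariance on the $\C^t$-factor --- analysed through a Taylor expansion of $\psi$ in $w$, whose higher-order modes are eliminated by the multiplicative independence of the characters $\{\tau|_U : \tau \in T\}$ afforded by admissibility and $\Q(U) = K$ --- forces $\psi$ to be affine in $w$. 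Thus $\tilde f$ is a coordinate permutation composed with a diagonal affine map.

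With $\tilde f$ so rigidified, the restriction $\phi_0 \coloneqq \phi|_{\OO K}$ is a $\Z$-module automorphism of $\OO K$ satisfying $\phi_0(ua) = \phi_U(u) \phi_0(a)$ for $u \in U$, $a \in \OO K$, where $\phi_U \in \Aut(U)$ is the induced automorphism on $\Gamma_T / \OO K \cong U$. Extending $\phi_0$ $\Q$-linearly to $K$ and using $\Q(U) = K$ to propagate multiplicative compatibility from $U$ to all of $K$, I obtain a $\Q$-algebra automorphism $\rho \colon K \to K$ with $\rho|_U = \phi_U$; in particular $\rho \in A_U$. Finally, matching the coordinate permutation of $\tilde f$ on the $\C^t$-factor against the permutation induced by $\rho$ on the embeddings gives $\rho \cdot T = T'$. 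The most delicate step will be the rigidity argument pinning down $\psi$ as affine in $w$, since biholomorphisms of $\C^t$ are famously non-rigid for $t \geq 2$; the crucial input is how the arithmetic data $(K, U, T)$ eliminate all non-linear Taylor modes.
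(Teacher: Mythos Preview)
Your sufficiency argument matches the paper's. For necessity, however, you take a genuinely different route: you try to rigidify the lifted biholomorphism $\tilde f$ completely to an affine map, then read off $\rho$ from it. The paper never does this. Instead, writing $\widetilde\Phi=(\xi_i,\zeta_j)$, it observes only that the \emph{differentials} $d\xi_i$ and $d\zeta_j$ are nonvanishing holomorphic $1$-forms on $X(K,U,\tau)$ with values in the flat line bundles $L_{\sigma_i\circ\rho}$ and $L_{\tau'_j\circ\rho}$, and then invokes Kasuya's computation of $H^{1,0}(X(K,U,\tau),L_\theta)$ (Theorem~\ref{Thm:cohomology_on_OT}) to conclude directly that each $\sigma_i\circ\rho$ and $\tau'_j\circ\rho$ coincides on $U$ with one of $\sigma_1,\ldots,\sigma_s,\tau_1,\ldots,\tau_t$. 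Simplicity then upgrades this to a bijection, and $\tilde\rho\in\Aut(K/\Q)$ is built from the resulting permutation of real embeddings. No structural information about $\widetilde\Phi$ beyond the nonvanishing of its coordinate differentials is ever used.

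Your approach has a real gap at precisely the step you flag as ``most delicate.'' The higher Taylor modes $\partial_w^\alpha\psi_j$ with $|\alpha|\ge 2$ are $\OO{K}$-invariant holomorphic functions on $\Half^s\times\C^t$ transforming under $U$ by the character $u\mapsto\tau'_j(\rho(u))\,\tau(u)^{-\alpha}$; killing them is exactly the statement that $H^{0}(X(K,U,\tau),L_\chi)=0$ for the relevant $\chi$, i.e.\ the $(p,q)=(0,0)$ case of Kasuya's theorem. ``Multiplicative independence of $\{\tau|_U\}$'' does not by itself do this: the Taylor coefficients are functions on $\Half^s$, not constants, and indeed the $\tau_k|_U$ are $t$ characters on a rank-$s$ group, so for $t>s$ they cannot even be multiplicatively independent. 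Even once $\psi$ is affine you still need the linear part to be a monomial matrix with constant entries, which again amounts to vanishing of $H^0$ for further flat bundles. So your route, carried through, would force you to reprove a chunk of Theorem~\ref{Thm:cohomology_on_OT} by hand; the paper's route uses that theorem once, at the level of $H^{1,0}$, and sidesteps all of the rigidity analysis. Your group-theoretic endgame (characterising $\OO{K}$ as the Fitting subgroup, extending $\phi_U$ to $\rho\in\Aut(K/\Q)$ via $\Q(U)=K$) is sound and slightly different in flavour from the paper's, which reaches $\rho\in\Hom(U,U)$ via Lemma~\ref{Lem:hom_to_U} rather than via $\phi(\OO{K})=\OO{K}$.
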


An OT manifold $X(K,U,T)$ which satisfies $\Q(U)=K$ is called 
\emph{of simple type}, 
as defined in \cite{OT05}. 
After Definition \ref{Def:of_simple_type}, 
it is explained that $X(K,U,T)$ is of simple type 
for generic admissible subgroups $U \subset \unitp{K}$. 

\subsection{Examples}
We give a few examples here. 
Let $K = \Q(\sqrt[m]{2})$ for an odd number $m=2t+1$. 
Then, 
$K$ has one real embedding and $2t$ complex embeddings. 
Since $\Aut(K/\Q) = \{ \id \}$, 
for all admissible subgroups $U$, 
we obtain $2^t$ OT manifolds $\{ X(K,U,T) \}_{T \in \mathcal{T}_{K}}$ 
which are not biholomorphic to each other. 
Furthermore, in \cite{APV20}, 
it is shown that these complex structures on 
OT manifolds of simple type are rigid. 
As a result, 
we obtain an OT manifold of dimension $2t+1$ 
that admits $2^t$ different rigid complex structures. 
In contrast, 
for instance, 
in the case of $K=\Q(\sqrt{1+\sqrt{2}})$ and $U=\unitp{K}$, 
we have $\lvert \mathcal{T}_{K} / A_U \rvert =1$. 
See Example \ref{Ex:(2,1)} and Example \ref{Ex:(1,t)}. 

\subsection{Strategy of the proof}
As suggested by the classical notation $X(K,U)$ for OT manifolds, 
most of their geometric properties do not reflect 
the data $\mathcal{T}_{K}$. 
However, 
Theorem \ref{Thm:cohomology_on_OT}, 
a result by Kasuya \cite{Kas23} 
about the Dolbeault cohomology of OT manifolds 
with values in a certain class of flat line bundles, 
illustrates how 
the geometric properties 
of OT manifolds reflect the data $\mathcal{T}_{K}$. 
In the proof of the main theorem, 
we construct nontrivial sections of certain flat line bundles
from a biholomorphism between two OT manifolds. 
By comparing the existence of these sections 
with Theorem \ref{Thm:cohomology_on_OT}, 
we obtain algebraic constraints. 
We note that a similar method was used in \cite{BV18} 
to determine the automorphism groups of Oeljeklaus–Toma manifolds.

\subsection{Organization of this paper}
The paper is organized as follows: 
\begin{itemize}
\item In Section \ref{section:Oeljeklaus-Toma manifolds}, 
we review the construction of OT manifolds 
and explain how the data $\mathcal{T}_K$ is involved in the construction.
Moreover, 
we refer to a result by Kasuya, 
which plays a crucial role in the proof of the main theorem. 
\item In Section 
\ref{section:Biholomorphic equivalence of conjugate OT manifolds}, 
we prove Theorem \ref{Thm:main_theorem}. 
\item In Section \ref{section:Examples}, 
we give some examples. 
\end{itemize}

\addtocontents{toc}{\protect\setcounter{tocdepth}{0}}
\section*{Acknowledgments}
\addtocontents{toc}{\protect\setcounter{tocdepth}{1}}
%目次に入れないようにしてある
I would like to express my gratitude to my supervisor, 
Kengo Hirachi, for his enormous support. 
I am also grateful to Hisashi Kasuya 
for suggesting this interesting problem. 
I extend my sincere thanks to Junnosuke Koizumi 
for helpful discussions.  
He constructed Example \ref{Ex:(2,1)} prior to the completion of the main theorem, 
and this example eventually led to a concise formulation of the result. 
This research is supported by FoPM, WINGS Program, the University of Tokyo, 
and JSPS KAKENHI Grant number 24KJ0931. 

\section{Oeljeklaus-Toma manifolds}
\label{section:Oeljeklaus-Toma manifolds}

The Oeljeklaus-Toma manifolds,  
first defined in \cite{OT05}, 
are higher-dimensional generalizations of Inoue-Bombieri surfaces \cite{Ino74}. 
In this section, 
we review the definition and properties of Oeljeklaus–Toma manifolds, 
giving a more precise formulation than in previous references 
by explicitly specifying the complex embeddings involved.

\subsection{Construction}

Let $K$ be a number field of degree $n = [ K : \Q]$ 
and of signature $(s,t)$, 
that is, 
$K$ admits $s$ real embeddings 
$\sigma_1, \ldots ,\sigma_s : K \hookrightarrow \R$, 
and $2t$ complex embeddings
$\sigma_{s+1}, \ldots ,\sigma_{s+2t} : K \hookrightarrow \C$. 
Since the complex embeddings come in conjugate pairs, 
we can label the indices so that $\sigma_{s+i} = \conj{\sigma_{s+t+i}}$ 
for all $1 \le i \le t$. 
We always consider only the case where $s,t \ge 1$. 
Define the following set: 
\[
    \mathcal{T}_{K} \coloneqq \left\{ T \subset \{ \sigma_{s+1}, \ldots ,\sigma_{s+2t} \} 
    \, \bigg| \,  | T \cap \{\sigma_{s+i},\sigma_{s+t+i}\} |=1
    \; \text{for all $1 \le i \le t$}
    \right\}, 
\]
and $\widetilde{\mathcal{T}}_K$ to be the set of the elements 
of $\mathcal{T}_{K}$ endowed with an ordering, 
that is, 
\[
    \widetilde{\mathcal{T}}_K \coloneqq \left\{ 
    \tau=(\tau_{1},\ldots,\tau_{t}) \in \{ \sigma_{s+1}, \ldots ,\sigma_{s+2t} \}^t
    \, \bigg| \, \{ \tau_{1},\ldots,\tau_{t} \} \in \mathcal{T}_K
    \right\}. 
\]
We define a map $[ \cdot ] : \widetilde{\mathcal{T}}_K \to \mathcal{T}_K$ by 
$[(\tau_{1},\ldots,\tau_{t})]=\{ \tau_{1},\ldots,\tau_{t}\}$, 
which disregards the ordering. 
The sets $\mathcal{T}_K, \widetilde{\mathcal{T}}_K$ and the map $[\cdot]$ are 
uniquely determined by $K$. 

Fix an element $\tau=(\tau_{1},\ldots,\tau_{t}) \in \widetilde{\mathcal{T}}_K$. 
We construct a compact complex manifold of dimension $s+t$ from 
$K$, $\tau$ and an admissible subgroup $U \subset \unitp{K}$, 
which will be defined later. 

Let $\OO{K}$ denote the ring of algebraic integers of $K$, 
$\unit{K}$ the group of units in $\OO{K}$, 
and $\unitp{K}$ the group of \emph{totally positive units}, 
defined by 
\[
    \unitp{K} \coloneqq 
    \left\{ u \in \unit{K}\, \mid \, 
    \sigma_i(u)>0 \; \text{for all $1 \le i \le s$} \right\}. 
\]
Let $\Half \coloneqq \{ z \in \C\, \mid \, \Im z > 0 \}$ be 
the upper half-plane. 
Consider the action $\OO{K} \curvearrowright \Half^s \times \C^t$ given by translations: 
\[
    T_a(w_1, \ldots ,w_s,z_1, \dots ,z_t) \coloneqq 
    (w_1 + \sigma_1(a), \ldots ,w_s + \sigma_{s}(a), 
    z_1+\tau_1(a),\ldots,z_t+\tau_t(a))
\]
for $a \in \OO{K}$, 
and the action $\unitp{K} \curvearrowright \Half^s \times \C^t$ given by rotations: 
\[
    R_u(w_1, \ldots ,w_s,z_1, \dots ,z_t) \coloneqq 
    (\sigma_1(u) \cdot w_1, \ldots ,\sigma_{s}(u) \cdot w_s, 
    \tau_1(u)\cdot z_1,\ldots,\tau_t(u)\cdot z_t)
\]
for $u \in \unitp{K}$. 
These actions define an action 
$( \unitp{K} \ltimes \OO{K} ) \curvearrowright \Half^s \times \C^t$. 
The rank of the group $\OO{K}$ is $n$ since 
$\OO{K} \otimes_{\Z} \Q \simeq K$ holds. 
The group structure of $\unitp{K}$ is described by Dirichlet's unit theorem as follows: 

\begin{theorem}[Dirichlet's unit theorem]
\label{Thm:Dirichlet's unit theorem}
    The image of the following inclusion
    \[
    \begin{array}{c}
        \log : \unitp{K} \hookrightarrow \R^{s+t} \\
        u \mapsto (\log \sigma_1(u), \ldots ,\log \sigma_s(u),
        \log \abs{\tau_{1}(u)}^2, \ldots ,\log \abs{\tau_{t}(u)}^2)
    \end{array}
    \]
    is a lattice in the hyperplane 
    $H \coloneqq \left\{x \in \R^{s+t}\, \mid \, 
    \sum_{i=1}^{s+t} x_i =0  \right\}$. 
\end{theorem}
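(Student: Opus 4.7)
The plan is to reduce the statement to the classical Dirichlet unit theorem for the full unit group $\unit{K}$ and then transfer it to $\unitp{K}$ by a finite-index argument. The sign map
\[
\unit{K} \longrightarrow \{\pm 1\}^s, \qquad u \longmapsto (\sgn \sigma_1(u),\ldots,\sgn \sigma_s(u))
\]
has kernel exactly $\unitp{K}$, so $[\unit{K} : \unitp{K}] \le 2^s$ is finite. A finite-index subgroup of a lattice is again a lattice of the same rank, so it suffices to establish the statement for $\unit{K}$ with the log-coordinates $\log \sigma_i(u)$ replaced by $\log |\sigma_i(u)|$ in the first $s$ entries; on $\unitp{K}$ these agree. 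Injectivity on $\unitp{K}$ is immediate: any kernel element has all archimedean absolute values equal to $1$, hence is a root of unity by Kronecker's theorem, and a totally positive root of unity in $K$ is forced to be $1$ by $\sigma_1$.

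The classical statement for $\unit{K}$ is then proved in three steps. First, the image lies in $H$: for any unit $u$ the identity
\[
|N_{K/\Q}(u)| = \prod_{i=1}^s |\sigma_i(u)| \cdot \prod_{j=1}^t |\sigma_{s+j}(u)|^2 = 1
\]
becomes, after taking logarithms, precisely the defining relation of $H$. Second, the image is discrete in $\R^{s+t}$: if $\log(u)$ is constrained to a compact set, then all archimedean absolute values of $u$ are bounded, so the elementary symmetric functions of the conjugates of $u$, which are the coefficients of its minimal polynomial over $\Z$, are also bounded, leaving only finitely many possibilities for $u$.

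The third step, showing the image has full rank $s+t-1$ in $H$, is the main obstacle and the genuine arithmetic content of the theorem. The standard route is via Minkowski's geometry of numbers: embed $\OO{K}$ as a full lattice in $\R^{s+2t}$ via $(\sigma_1,\ldots,\sigma_s,\Re\sigma_{s+1},\Im\sigma_{s+1},\ldots,\Re\sigma_{s+t},\Im\sigma_{s+t})$, and apply Minkowski's convex body theorem to boxes whose widths are prescribed so as to be very small in all but one archimedean slot. Playing off the resulting nonzero algebraic integers against each other across a chain of such boxes, one extracts, for each index $j\in\{1,\ldots,s+t\}$, a unit $u_j$ whose logarithmic image has a strictly dominant $j$-th coordinate. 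A linear independence argument based on the sign pattern of these dominant entries then shows that any $s+t-1$ of the vectors $\log(u_j)$ span a full-rank sublattice of $H$; combined with discreteness this yields the lattice property. The Minkowski step is where the real work is concentrated, while the first two steps and the passage to $\unitp{K}$ are essentially formal.
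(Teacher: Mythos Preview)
The paper does not prove this statement at all: Theorem~\ref{Thm:Dirichlet's unit theorem} is simply quoted as the classical Dirichlet unit theorem and used as input, with no argument given. Your outline is a correct sketch of the standard textbook proof (finite-index reduction from $\unit{K}$ to $\unitp{K}$, the norm identity placing the image in $H$, discreteness via bounded coefficients of minimal polynomials, and full rank via Minkowski's convex body theorem), so there is nothing to compare against and nothing wrong with what you wrote beyond the fact that the Minkowski step is only gestured at rather than carried out.
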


In particular, 
the rank of the group $\unitp{K}$ is $s+t-1$. 
To make the action 
$( \unitp{K} \ltimes \OO{K} ) \curvearrowright \Half^s \times \C^t$ discrete, 
we take a subgroup $U \subset \unitp{K}$ of lower rank. 

\begin{definition}\label{Def:admissible}
    Let $\pr_{\R^s} : \R^{s+t} \to \R^{s}$ be the projection onto the first $s$ coordinates. 
    We call a subgroup $U \subset \unitp{K}$ \emph{admissible} if 
    the rank of $U$ is $s$ and the image
    $\pr_{\R^s}(\log (U))$ is a lattice in $\R^s$. 
\end{definition}

By Dirichlet's unit theorem, 
one can always find an admissible subgroup $U$. 
We define a group $G_{\tau} \coloneqq U \ltimes \OO{K}$ and 
an action $G_{\tau}\curvearrowright (\Half^s \times \C^t)$ as 
the restriction of the action defined above. 
Oeljeklaus and Toma proved in \cite{OT05} that this action
is fixed-point-free, properly discontinuous, 
and co-compact for any 
admissible subgroup $U \subset \unitp{K}$. 
Now we can define OT manifolds. 

\begin{definition}
    An \emph{OT manifold} associated to a number field $K$, 
    an admissible subgroup $U \subset \unitp{K}$ and 
    $\tau \in \widetilde{\mathcal{T}}_K$ is 
    a compact complex manifold constructed as the quotient 
    \[
        X(K,U,\tau) \coloneqq (\Half^s \times \C^t) / G_{\tau}. 
    \]
    We call this an OT manifold \emph{of type $(s,t)$}. 
    For $\tau, \tau' \in \widetilde{\mathcal{T}}_K$, 
    we say the OT manifolds $X(K,U,\tau)$ and $X(K,U,\tau')$ are \emph{conjugate}. 
\end{definition}

\begin{definition}
    Let $X$ be a complex manifold. 
    Denote the biholomorphism class of $X$ by $[X]$. 
    For $T=[\tau] \in \mathcal{T}_K$, 
    we define
    \[
        X(K,U,T) \coloneqq [X(K,U,\tau)], 
    \]
    which is well-defined since a biholomorphism between
    $X(K,U,\tau)$ and $X(K,U,\tau')$ with $[\tau]=[\tau']$ 
    can be constructed 
    by a permutation of the coordinates of $\C^t$.
\end{definition}

\begin{remark}
    OT manifolds of type $(1,1)$ are known as Inoue-Bombieri surfaces. 
    OT manifolds originally arose from an attempt to generalize 
    Inoue-Bombieri surfaces to manifolds of higher-dimension. 
\end{remark}

By taking complex conjugates of some of the coordinates of $\C^t$, 
we can easily show that conjugate OT manifolds are diffeomorphic. 

\begin{proposition}
    For all $\tau, \tau' \in \widetilde{\mathcal{T}}_K$, 
    the complex manifolds $X(K,U,\tau)$ and $X(K,U,\tau')$ are 
    diffeomorphic. 
\end{proposition}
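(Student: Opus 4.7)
The plan is to construct an explicit equivariant diffeomorphism of the universal cover $\Half^s \times \C^t$ that intertwines the actions of $G_\tau$ and $G_{\tau'}$, and then pass to quotients. The key observation is that for each index $i \in \{1,\ldots,t\}$, the two admissible choices for $\tau_i$ (namely $\sigma_{s+i}$ and $\sigma_{s+t+i} = \conj{\sigma_{s+i}}$) are complex conjugates of one another, so $\tau'_i$ is either equal to $\tau_i$ or to $\conj{\tau_i}$.

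Concretely, I would let $I \subset \{1,\ldots,t\}$ denote the set of indices where $\tau_i \ne \tau'_i$, and define $\phi : \Half^s \times \C^t \to \Half^s \times \C^t$ as the identity on the $\Half^s$-factor and on each $\C$-coordinate $z_i$ with $i \notin I$, and as complex conjugation $z_i \mapsto \conj{z_i}$ on each $\C$-coordinate with $i \in I$. This is manifestly a smooth involution, hence a diffeomorphism (although not a biholomorphism when $I \ne \emptyset$).

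Next I would verify equivariance. Since $G_\tau$ and $G_{\tau'}$ coincide as abstract groups (both equal $U \ltimes \OO{K}$), equivariance means that the translation and rotation actions indexed by a given $a \in \OO{K}$ or $u \in U$ are intertwined by $\phi$. On coordinates $i \notin I$ and on $\Half^s$ the two actions literally agree, so there is nothing to check; on coordinates $i \in I$, the needed identities
\[
    \conj{z_i + \tau_i(a)} = \conj{z_i} + \tau'_i(a), \qquad \conj{\tau_i(u)\, z_i} = \tau'_i(u)\, \conj{z_i}
\]
follow immediately from $\tau'_i = \conj{\tau_i}$. The equivariant $\phi$ then descends to a diffeomorphism $X(K,U,\tau) \to X(K,U,\tau')$. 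There is no real obstacle here, as the construction is entirely forced by the definitions; the only substantive point is the elementary one that conjugation of a $\C$-coordinate converts translations and rotations by $\tau_i$-values into the corresponding operations with $\tau'_i$-values.
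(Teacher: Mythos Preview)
Your approach is the same as the paper's: the paper simply remarks that ``by taking complex conjugates of some of the coordinates of $\C^t$'' one sees the quotients are diffeomorphic, and you have spelled this out carefully with the equivariant map $\phi$.

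There is one small oversight. You assert that for each $i$ the only two possibilities for $\tau_i$ are $\sigma_{s+i}$ and $\sigma_{s+t+i}$, so that $\tau'_i \in \{\tau_i,\conj{\tau_i}\}$. This is not what the definition of $\widetilde{\mathcal{T}}_K$ says: an element $\tau$ is an \emph{arbitrary} ordering of a set in $\mathcal{T}_K$, so in general $\tau_i$ and $\tau'_i$ need not lie in the same conjugate pair. (For example, with $t=2$ one could have $\tau=(\sigma_{s+1},\sigma_{s+2})$ and $\tau'=(\sigma_{s+2},\sigma_{s+1})$.) The fix is immediate: since $[\tau]$ and $[\tau']$ each select exactly one embedding from each conjugate pair, there is a permutation $\pi\in\mathfrak{S}_t$ such that $\tau'_{\pi(i)}\in\{\tau_i,\conj{\tau_i}\}$ for all $i$. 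Precomposing with the coordinate permutation $z\mapsto(z_{\pi(i)})_i$ on $\C^t$ --- which is even a biholomorphism and is equivariant for the corresponding actions --- reduces to the situation you treat, and then your conjugation map $\phi$ finishes the argument. The paper itself handles the permutation step separately (in the remark showing that $X(K,U,T)$ is well-defined for $T\in\mathcal{T}_K$), which is why its one-line justification only mentions conjugation.
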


In \cite{OT05}, 
where OT manifolds are originally defined, 
certain OT manifolds are called \emph{of simple type}. 
The main theorem of this paper is precisely about 
OT manifolds of simple type. 

\begin{definition}\label{Def:of_simple_type}
    Let $(K,U)$ be a pair of a number field $K$ 
    and an admissible subgroup $U \subset \unitp{K}$. 
    We call $(K,U)$ \emph{of simple type} if 
    there exists no proper intermediate field extension 
    $\Q \subsetneq K' \subsetneq K$
    such that $U \subset \unitp{K'}$. 
    We call an OT manifold $X(K,U,\tau)$ \emph{of simple type} if 
    $(K,U)$ is of simple type. 
\end{definition}

There are at most finitely many 
proper intermediate field extensions 
$\Q \subsetneq K_1, \ldots, K_r \subsetneq K$. 
Since $\unitp{K_1}, \ldots, \unitp{K_r}$ are subgroups of $\unitp{K}$
of lower rank, 
$(K,U)$ is of simple type 
for generic admissible subgroups $U \subset \unitp{K}$. 
It can easily be seen that $(K,U)$ is of simple type 
if and only if there exists $u \in U$ such that $\Q(u)=K$. 
For other equivalent conditions, see \cite[Lemma 3.11]{Kan25}. 
We also note that an OT manifold of type 
$(s,1)$ is of simple type (\cite[Lemma 1.6]{OT05}). 

\subsection{The Dolbeault cohomology of OT manifolds}

We cite a result by Kasuya \cite{Kas23} 
on the Dolbeault cohomology of OT manifolds 
with values in a certain class of flat line bundles. 
As the result illustrates how 
the geometric properties of $X(K,U,\tau)$ reflect the data $\tau$, 
it plays a crucial role in the proof of the main theorem. 

We consider a flat line bundle $L_{\theta}$ over an OT manifold $X(K,U,\tau)$,  
defined by $\theta \in \Hom(U,\C^*)$. 
More precisely, 
$L_{\theta}$ is constructed as follows. 
We consider the action of $G_{\tau}$ on $(\Half^s \times \C^t) \times \C$ 
defined by $(u,a) \cdot (x,c) = ((u,a)\cdot x , \theta(u)c)$ for 
$(u,a) \in G_{\tau}$, $x \in \Half^s \times \C^t$, 
and $c \in \C$. 
The natural projection 
$((\Half^s \times \C^t) \times \C)/ G_{\tau} \to X(K,U,\tau)$
defines the flat line bundle $E_{\theta}$. 
In \cite{Kas23}, 
the vanishing and non-vanishing of the Dolbeault cohomology 
of these line bundles are studied, 
and the results are described in the following algebraic terms: 

\begin{theorem}[\cite{Kas23}, Corollary 5.2]\label{Thm:cohomology_on_OT}
    Let $\tau=(\tau_1, \ldots,\tau_t) \in \widetilde{\mathcal{T}}_{K}$. 
    Then we have $H^{p,q}(X(K,U,\tau), E_{\theta}) \neq \{ 0 \}$ 
    if and only if 
    there exist sets $I \subset \{1,\ldots,s\}, K,L \subset \{1,\ldots,t\}$ with 
    $|I|+|K|=p$ and $|L| \le q$ which satisfy
    \[
    \theta(u) = 
    \prod_{i \in I} \sigma_i(u)
    \prod_{j \in J} \tau_j(u)
    \prod_{k \in K} \conj{\tau}_k(u) 
    \]
    for all $u \in U$. 
\end{theorem}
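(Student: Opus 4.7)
The plan is to recognize $X(K,U,\tau)$ as a quotient of a completely solvable Lie group by the lattice $G_\tau$ and apply a Dolbeault--Hattori type theorem: the twisted Dolbeault cohomology $H^{p,q}(X,E_\theta)$ is computed by a finite-dimensional bigraded model built from $G_\tau$-equivariant forms on the universal cover $\Half^s \times \C^t$. This follows the general philosophy used in Kasuya's earlier work on solvmanifold cohomology.

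First I would introduce the canonical forms on the cover. On $\Half^s$, the $1$-forms $dw_i$ and $d\conj{w}_i$ transform under $u \in U$ by the same real character $\sigma_i(u)$, while on $\C^t$ the forms $dz_j$ and $d\conj{z}_j$ transform by $\tau_j(u)$ and $\conj{\tau_j(u)}$, respectively. A wedge $\omega_{A,B,C,D} = dw^{A} \wedge d\conj{w}^{B} \wedge dz^{C} \wedge d\conj{z}^{D}$ has bidegree $(|A|+|C|,\,|B|+|D|)$ and carries the $U$-character
\[
    \chi_{A,B,C,D}(u) = \prod_{i\in A}\sigma_i(u)\prod_{j\in B}\sigma_j(u)\prod_{k\in C}\tau_k(u)\prod_{l\in D}\conj{\tau_l(u)}.
\]
Coefficient functions are drawn from an $\OO{K}$-invariant algebra whose $U$-eigendecomposition is generated by integer powers of the imaginary parts $y_i = \Im w_i$, with each $y_i^{a_i}$ carrying the character $\sigma_i(u)^{a_i}$.

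The main step, and the principal obstacle, is to establish that the inclusion of the finite-dimensional subcomplex generated by these data into the full Dolbeault complex of $E_\theta$-valued forms is a quasi-isomorphism. This is a Dolbeault analogue of Hattori's theorem for solvmanifolds; it requires the complete solvability of the ambient Lie group together with the verification of a Mostow-type condition, and one must handle the non-compact factor $\Half^s$ by $\OO{K}$-equivariant Fourier analysis together with a careful treatment of asymptotic behaviour near the boundary.

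Granting the model, the characterisation is a combinatorial computation. The operator $\bar\partial$ acts on the model by $\bar\partial(dw_i) = 0$, $\bar\partial(dz_j) = 0$ and a standard rule on the coefficient monomials; matching $\theta$ against the character $\chi_{A,B,C,D}$ after absorbing the $y^a$-factor, one finds that a nontrivial class in bidegree $(p,q)$ appears exactly when $\theta$ admits a factorisation of the form asserted in the theorem. The asymmetry between the $(1,0)$-condition (equality $|I|+|K|=p$) and the $(0,1)$-condition (inequality $|L|\le q$) reflects the fact that the $d\conj{w}_j$-factors can be freely inserted, their $\sigma_j$-characters being absorbed by adjusting the coefficient $y^a$, so that each admissible triple produces nontrivial classes across a range of antiholomorphic degrees up to $q$.
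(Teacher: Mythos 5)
This statement is not proved in the paper at all: it is imported verbatim as \cite[Corollary 5.2]{Kas23}, and the paper's ``proof'' consists of the citation. So the relevant question is whether your sketch would stand on its own as a proof of Kasuya's result, and it does not.

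Your overall strategy --- view $X(K,U,\tau)$ as a solvmanifold-type quotient, reduce the $E_\theta$-valued Dolbeault complex to a finite-dimensional model built from the forms $dw^A\wedge d\conj{w}^B\wedge dz^C\wedge d\conj{z}^D$ with monomial coefficients in $y_i=\Im w_i$, and then match characters --- is indeed the shape of the argument in the literature. The character bookkeeping is right, and your heuristic for the asymmetry between the equality $|I|+|K|=p$ and the inequality $|L|\le q$ (the $\sigma_i$-characters of the $d\conj{w}_i$ can be absorbed into the coefficient monomial, while the $\conj{\tau}_k$-characters of the $d\conj{z}_k$ cannot) is the correct explanation. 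But the step you yourself label ``the principal obstacle'' --- that the inclusion of this finite-dimensional subcomplex into the full $E_\theta$-valued Dolbeault complex is a quasi-isomorphism --- is the entire mathematical content of the theorem, and you only name the tools one might use (a Mostow-type condition, $\OO{K}$-equivariant Fourier analysis, control of asymptotics in the $\Half^s$-directions) without carrying out any of it. Hattori's theorem as usually stated concerns de Rham cohomology with coefficients in a flat bundle over a compact solvmanifold; the Dolbeault analogue is not automatic, fails for general solvmanifolds with left-invariant complex structures, and for OT manifolds requires a genuine argument (Fourier decomposition along the fiber torus $\R^n/\OO{K}$ and a proof that the nonzero Fourier modes contribute nothing, uniformly enough to pass to cohomology). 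Without that, the ``only if'' direction in particular is unsupported: you have exhibited candidate classes, not shown that no others exist.

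Two smaller points. First, your assertion that the coefficient eigenfunctions are \emph{integer} powers $y_i^{a_i}$ is not right; the exponents are determined by the requirement that $\prod_i\sigma_i(u)^{a_i}$ equal a prescribed character of $U$ and are in general real, which is harmless but signals that the combinatorial endgame has not actually been checked. Second, even granting the model, you still owe the verification that the surviving monomial forms are $\bar\partial$-closed and non-exact in the model; ``a standard rule on the coefficient monomials'' is not a computation. As it stands the proposal is a correct table of contents for Kasuya's proof rather than a proof.
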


In particular, 
we have $H^{1,0}(X(K,U,\tau), E_{\theta}) \neq \{ 0 \}$ if and only if 
$\theta \in \{\sigma_1,\ldots,\sigma_s,\tau_1,\ldots,\tau_t\}$ as maps 
with domain $U$. 

\section{Biholomorphic equivalence of conjugate OT manifolds} 
\label{section:Biholomorphic equivalence of conjugate OT manifolds}

In this section, 
we prove the main theorem of this paper. 

\begin{theorem}\label{Thm:main_theorem}
    Let $K$ be a number field and $U$ 
    an admissible subgroup such that 
    $(K,U)$ is of simple type. 
    Define a group 
    \[
    A_U \coloneq \left\{ \rho \in \Aut(K/\Q) \,\Big| \, \rho(U)=U \right\} 
    \]
    and an action $A_U \curvearrowright \mathcal{T}_{K}$ by 
    \[
    \rho \cdot T \coloneqq \{ \sigma \circ \rho \mid \sigma \in T \} 
    \]
    for $T \in \mathcal{T}_K$. 
    Then, 
    for $T, T' \in \mathcal{T}_{K}$, 
    we have $X(K,U,T)=X(K,U,T')$ 
    if and only if 
    there exists $\rho \in A_U$ such that $\rho \cdot T = T'$. 
    In particular, 
    there are $\lvert \mathcal{T}_{K} / A_U \rvert$
    biholomorphism classes of complex manifolds 
    in the set
    $\{ X(K,U,T) \mid T \in \mathcal{T}_{K} \}$. 
\end{theorem}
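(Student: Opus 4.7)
The plan is to prove the two implications separately. The \emph{if} direction is geometric and direct; the \emph{only if} direction is where the main work lies and will use Kasuya's Theorem~\ref{Thm:cohomology_on_OT} together with the simple type hypothesis.

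For the \emph{if} direction, given $\rho \in A_U$ with $\rho \cdot T = T'$, I would order representatives $\tau, \tau'$ of $T, T'$ so that $\tau'_j = \tau_j \circ \rho$, and take $\pi \in S_s$ with $\sigma_{\pi(i)} = \sigma_i \circ \rho$. Since $\rho$ preserves $U$ and $\OO{K}$, the map $(u,a) \mapsto (\rho^{-1}(u), \rho^{-1}(a))$ is a group isomorphism $G_\tau \to G_{\tau'}$, and a direct check shows that the coordinate permutation $\Phi(w,z) = (w_{\pi^{-1}(1)}, \ldots, w_{\pi^{-1}(s)}, z_1, \ldots, z_t)$ intertwines the two actions via this isomorphism, so it descends to a biholomorphism.

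For the \emph{only if} direction, let $f: X(K,U,\tau) \to X(K,U,\tau')$ be a biholomorphism. The common fundamental group $G = U \ltimes \OO{K}$ has $\OO{K}$ as its unique maximal abelian normal subgroup (any abelian normal subgroup containing $\OO{K}$ must commute with $\OO{K}$, which forces its $U$-component to be trivial), so $\OO{K}$ is characteristic and $f_*$ induces an automorphism $\varphi \in \Aut(U)$. Pullback of flat line bundles gives $f^* E_\theta = E_{\theta \circ \varphi}$, so Theorem~\ref{Thm:cohomology_on_OT} yields
\[
\{\sigma_i|_U \circ \varphi\}_i \cup \{\tau'_j|_U \circ \varphi\}_j = \{\sigma_i|_U\}_i \cup \{\tau_j|_U\}_j
\]
as subsets of $\Hom(U, \C^*)$. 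The simple type hypothesis implies that each $\tau_j|_U$ is non-real-valued (otherwise $\tau_j$ and $\conj{\tau_j}$ would agree on a generator $u_0 \in U$ of $K$, contradicting $\tau_j \ne \conj{\tau_j}$ on $K$), so the real and non-real characters match up separately, producing permutations $\pi_1 \in S_s$ and $\pi_2 \in S_t$ with $\sigma_i \circ \varphi = \sigma_{\pi_1(i)}$ and $\tau'_j \circ \varphi = \tau_{\pi_2(j)}$ on $U$.

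The final step is to upgrade $\varphi$ to a field automorphism of $K$. Fix $u_0 \in U$ with $\Q(u_0) = K$ and set $v_0 := \varphi(u_0)$. Using the relations above together with complex conjugation, the multiset of all embedding values of $v_0$ equals that of $u_0$, so $v_0$ and $u_0$ share a minimal polynomial; hence there exists $\rho \in \Aut(K/\Q)$ with $\rho(u_0) = v_0$. Since $u_0$ generates $K$, the identities on $U$ lift to equalities $\sigma_i \circ \rho = \sigma_{\pi_1(i)}$ and $\tau'_j \circ \rho = \tau_{\pi_2(j)}$ as embeddings of $K$; injectivity of the Minkowski embedding then forces $\rho|_U = \varphi$, so $\rho \in A_U$, and rewriting the second identity as $\{\tau_j \circ \rho^{-1}\}_j = T'$ exhibits $\rho^{-1}$ as the required element. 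The principal obstacle is exactly this reconstruction: extracting a field automorphism of $K$ from a purely abstract automorphism of $U$. The generator supplied by simple type is the only bridge available, and the conjugation symmetry between $\tau_j$ and $\conj{\tau_j}$ must be exploited to ensure that $v_0$ lies in $K$ itself rather than in the larger Galois closure.
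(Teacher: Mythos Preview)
Your proposal is correct and follows essentially the same route as the paper: both lift the biholomorphism, extract an induced (auto)morphism of $U$, invoke Kasuya's Theorem~\ref{Thm:cohomology_on_OT} to match the embedding-characters on $U$, and then use the simple-type generator $u_0$ to promote this to an element of $\Aut(K/\Q)$. The implementation differs only in details---you show $\OO{K}$ is characteristic where the paper proves Lemma~\ref{Lem:hom_to_U}, you pull back flat bundles abstractly where the paper writes down the explicit holomorphic $1$-forms $d\xi_i, d\zeta_j$, and you build $\rho$ by matching the minimal polynomials of $u_0$ and $\varphi(u_0)$ where the paper defines $\tilde\rho$ via the diagram $\sigma_1\circ\tilde\rho=\sigma_{\Sigma(1)}$---but the architecture is the same.
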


Before proving the theorem, 
we establish the following 

\begin{lemma}\label{Lem:hom_to_U}
    The map $\Hom(G_{\tau}, U) \to \Hom(U, U)$ induced by 
    the natural inclusion $U \hookrightarrow U \ltimes \OO{K} = G_{\tau}$ is 
    an isomorphism. 
\end{lemma}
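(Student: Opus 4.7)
The plan is to construct an explicit inverse to the restriction map by sending each $\psi \in \Hom(U,U)$ to the composition $G_{\tau} \twoheadrightarrow G_{\tau}/\OO{K} = U \xrightarrow{\psi} U$. Since $U \hookrightarrow G_{\tau} \twoheadrightarrow U$ is the identity, this extension restricts back to $\psi$, giving surjectivity. Injectivity then reduces to showing that every $\phi \in \Hom(G_{\tau}, U)$ kills $\OO{K}$, so that $\phi$ factors through this projection and is determined by $\phi|_{U}$.

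To prove the latter, first I would observe that $U$ is abelian (being a subgroup of $K^{\times}$) and torsion-free: since $s \ge 1$, any root of unity in $U \subset \unitp{K}$ is sent to a positive real root of unity by $\sigma_1$, hence to $1$, and thus equals $1$. So any $\phi \in \Hom(G_{\tau}, U)$ factors through the abelianization $G_{\tau}/[G_{\tau},G_{\tau}]$ and must kill its torsion subgroup. Next, in $G_{\tau} = U \ltimes \OO{K}$ with multiplication $(u_1, a_1)(u_2, a_2) = (u_1 u_2,\, a_1 + u_1 a_2)$, a direct computation yields
\[
[(u, 0),\, (1, a)] = (1,\, (u-1)a)
\]
for all $u \in U$ and $a \in \OO{K}$. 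Since $\rank U = s \ge 1$, we may pick $u \in U$ with $u \neq 1$; then $(u-1)\OO{K}$ is a nonzero ideal of $\OO{K}$, hence has finite index as an additive subgroup. Therefore the image of $\OO{K}$ in $G_{\tau}/[G_{\tau},G_{\tau}]$ is finite, and so is killed by any homomorphism to the torsion-free group $U$. This forces $\phi(\OO{K}) = 0$ and closes the argument.

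The only potentially delicate point is the finite-index claim, but it is immediate from the standard fact that every nonzero ideal of $\OO{K}$ has finite additive index, so I do not anticipate any serious technical obstacle in carrying out the proof along these lines.
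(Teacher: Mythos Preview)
Your proof is correct and follows essentially the same approach as the paper: both compute the commutator $[(u,0),(1,a)]=(1,(u-1)a)$, use that $(u-1)\OO{K}$ has finite index in $\OO{K}$ for $u\neq 1$, and conclude from torsion-freeness of $U$ that any homomorphism $G_\tau\to U$ kills $\OO{K}$. You are simply more explicit than the paper about surjectivity (constructing the inverse via the projection $G_\tau\twoheadrightarrow U$) and about why $U$ is torsion-free.
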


\begin{proof}
    Since any map $\rho \in \Hom(G_{\tau},U)$ is trivial on 
    the commutator subgroup $[G_{\tau},G_{\tau}]$, 
    it is in particular trivial on 
    \[
    \left\{ (u,0)(1,a)(u^{-1},0)(1,-a)=(0,(u-1)a) 
    \mid u \in U, a \in \OO{K} \right\}
    \subset [G_{\tau},G_{\tau}]. 
    \]
    Fix $1 \neq u \in U$. 
    As the $\Q$-linear map 
    $\times (u-1) \colon \OO{K} \otimes_{\Z} \Q \to \OO{K} \otimes_{\Z} \Q$ 
    is invertible, 
    we have $[\OO{K} : (u-1)\OO{K}] < \infty$. 
    Since $\rho$ is trivial on $(0,(u-1)\OO{K}) \subset [G_{\tau},G_{\tau}]$, 
    which is a finite-index subgroup of $(0,\OO{K})$, 
    and $U$ is torsion-free, 
    it follows that $\rho$ is trivial on $(0,\OO{K})$. 
\end{proof}

\begin{proof}[Proof of Theorem \ref{Thm:main_theorem}]
    First, 
    we assume that 
    there exists $\rho \in A_U$ such that $\rho \cdot T =T'$. 
    We can take 
    $\tau=(\tau_1,\ldots,\tau_t)$ and 
    $\tau'=(\tau'_1, \ldots ,\tau'_t) 
    \in \widetilde{\mathcal{T}}_K$ 
    so that 
    $[\tau]=T$, $[\tau']=T'$ and 
    $\tau'_i \circ \rho = \tau_i$ 
    for all $1 \le i \le t$. 
    There exists a permutation $\Sigma \in \mathfrak{S}_s$ such that 
    $\sigma_i \circ \rho = \sigma_{\Sigma(i)}$ for all 
    $1\le i \le s$. 
    Define a biholomorphism
    \[
    \begin{array}{c}
    \widetilde{\Phi} : \Half^s \times \C^t \stackrel{\sim}{\longrightarrow} 
    \Half^s \times \C^t \\
    ((w_i)_{i=1}^s,(z_j)_{j=1}^t) \mapsto 
    ((w_{\Sigma(i)})_{i=1}^s,(z_j)_{j=1}^t). 
    \end{array}
    \]
    For an element $g=(u,a) \in G_{\tau}$ and 
    $g' =(\rho(u),\rho(a)) \in G_{\tau'}$, 
    we have
    \begin{align*}
        \widetilde{\Phi}(g \cdot (w,z)) &=
        \widetilde{\Phi}((\sigma_i(u)w_i+\sigma_i(a))_{i=1}^s,
        (\tau_j(u)z_j+\tau_j(a))_{j=1}^t) \\
        &= ((\sigma_{\Sigma(i)}(u)w_{\Sigma(i)}+\sigma_{\Sigma(i)}(a))_{i=1}^s,
        (\tau_j(u)z_j+\tau_j(a))_{j=1}^t) \\
        &= ((\sigma_i(\rho(u))w_{\Sigma(i)}+\sigma_i(\rho(a)))_{i=1}^s,
        (\tau'_j(\rho(u))z_j+\tau'_j(\rho(a)))_{j=1}^t) \\
        &= g' \cdot ((w_{\Sigma(i)})_{i=1}^s,(z_j)_{j=1}^t) \\
        &= g' \cdot \widetilde{\Phi}((w,z)). 
    \end{align*}
    Therefore, 
    the map $\widetilde{\Phi}$ induces the biholomorphism
    \[
    \Phi : (\Half^s \times \C^t)/G_{\tau} \stackrel{\sim}{\longrightarrow} 
    (\Half^s \times \C^t)/G_{\tau'}, 
    \]
    which shows $X(K,U,T)=X(K,U,T')$. 

    Conversely, 
    we assume that $X(K,U,T)=X(K,U,T')$. 
    There exists a biholomorphism
    \[
    \Phi : (\Half^s \times \C^t)/G_{\tau} \stackrel{\sim}{\longrightarrow} 
    (\Half^s \times \C^t)/G_{\tau'}, 
    \]
    which induces a biholomorphism
    \[
    \widetilde{\Phi} : \Half^s \times \C^t \stackrel{\sim}{\longrightarrow} 
    \Half^s \times \C^t. 
    \]
    For all $g \in G_{\tau}$, 
    there is a unique element 
    $\rho'(g) \in G_{\tau'}$ 
    such that
    \begin{equation}\label{Equ:tildePhi_compatibility}
        \widetilde{\Phi}(g\cdot(w,z))=\rho'(g) \cdot \widetilde{\Phi}((w,z)). 
    \end{equation}
    Obviously, 
    the map $\rho': G_{\tau} \to G_{\tau'}$ is a group homomorphism. 
    So we can define a homomorphism $\rho : U \to U$ 
    as the composition of the following maps: 
    \[
    U \stackrel{\iota}{\hookrightarrow} U \ltimes \OO{K} = G_{\tau}
    \stackrel{\rho'}{\longrightarrow} 
    G_{\tau'} = 
    U \ltimes \OO{K} \stackrel{\pr}{\twoheadrightarrow} U, 
    \]
    where the maps $\iota$ and $\pr$ are 
    the natural inclusion and surjection respectively. 
    By Lemma \ref{Lem:hom_to_U}, we have 
    $\pr \circ \rho'(u,a)=\rho(u)$
    for all $g=(u,a) \in G_{\tau}$. 
    
    Write 
    $\widetilde{\Phi}(w,z)=((\xi_i(w,z))_{i=1}^s,(\zeta_j(w,z)_{j=1}^t))$. 
    By equation (\ref{Equ:tildePhi_compatibility}), 
    for all $g=(u,a) \in G_{\tau}$ we have
    \[
    \left\{
    \begin{aligned}
    g^*\xi_i &= \sigma_i(\rho(u))\xi_i + \sigma_i(\rho''(g)) \\
    g^*\zeta_j &= \tau'_j(\rho(u))\zeta_j + \tau'_j(\rho''(g)) 
    \end{aligned}
    \right. 
    \]
    for all $1 \le i \le s$ and $1 \le j \le t$, 
    where $\rho'(g)=(\rho(u),\rho''(g))$. 
    By taking the exterior derivative, 
    we have
    \[
    \left\{
    \begin{aligned}
    g^*(d\xi_i) &= \sigma_i(\rho(u))d\xi_i \\
    g^*(d\zeta_j) &= \tau'_j(\rho(u))d\zeta_i. 
    \end{aligned}
    \right. 
    \]
    for all $1 \le i \le s$ and $1 \le j \le t$. 
    As a result, 
    we have non-trivial sections of holomorphic $1$-form 
    with values in flat line bundles as follows: 
    \[
    \left\{
    \begin{aligned}
    0 \neq d\xi_i &\in H^{1,0}(X(K,U,\tau), L_{\sigma_i \circ \rho}) \\
    0 \neq d\zeta_j &\in H^{1,0}(X(K,U,\tau), L_{\tau'_j \circ \rho}). 
    \end{aligned}
    \right. 
    \]
    By Theorem \ref{Thm:cohomology_on_OT}, 
    we have 
    $\{\sigma_1\circ \rho,\ldots,\sigma_s\circ \rho,\tau_1'\circ \rho,\ldots,\tau_t'\circ \rho\} 
    \subset \{\sigma_1,\ldots,\sigma_s,\tau_1,\ldots,\tau_t\}$ 
    as maps with domain $U$. 
    We now prove that these two sets are equal. 
    By the simplicity of $U$, 
    we can take $u_0 \in U$ such that $\Q(u_0)=K$. 
    If $\sigma_i \circ \rho=\sigma_j \circ \rho = \sigma_k$ on $U$
    for some $1\le i,j,k\le s+2t$, 
    we have
    \begin{equation}\label{Equ:sigma}
        \sigma_k(K)=\sigma_k(\Q(u_0))=\Q(\sigma_k(u_0))
        =\Q(\sigma_i(\rho(u_0))) = \sigma_i(\Q(\rho(u_0))). 
    \end{equation}
    Thus, 
    we have $[K:\Q]=[\Q(\rho(u_0)):\Q]$ which shows $\Q(\rho(u_0))=K$. 
    Since $\sigma_i(\rho(u_0))=\sigma_j(\rho(u_0))$, 
    we have $\sigma_i(x)=\sigma_j(x)$ for all $x \in K$. 
    Therefore, 
    there are $s+t$ maps in the set 
    $\{\sigma_1\circ \rho,\ldots,\sigma_s\circ \rho,
    \tau_1'\circ \rho,\ldots,\tau_t'\circ \rho\}$ and we have 
    $\{\sigma_1\circ \rho,\ldots,\sigma_s\circ \rho,
    \tau_1'\circ \rho,\ldots,\tau_t'\circ \rho\} 
    = \{\sigma_1,\ldots,\sigma_s,\tau_1,\ldots,\tau_t\}$. 
    As there are exactly $s$ real values in 
    $\{\sigma_1(\rho(u_0)),\ldots,\sigma_s(\rho(u_0)),
    \tau'_1(\rho(u_0)),\ldots,\tau'_t(\rho(u_0))\}$, 
    we have 
    \[
    \left\{
    \begin{aligned}
    &\{\sigma_1\circ \rho,\ldots,\sigma_s\circ \rho\} = 
    \{\sigma_1,\ldots,\sigma_s \} \\
    &\{\tau_1'\circ \rho,\ldots,\tau_t'\circ \rho\} 
    = \{\tau_1,\ldots,\tau_t\}. 
    \end{aligned}
    \right. 
    \]
    Take permutations $\Sigma \in \mathfrak{S}_s$ and 
    $\Pi \in \mathfrak{S}_t$ so that 
    $\sigma_i \circ \rho=\sigma_{\Sigma(i)}$, 
    $\tau'_{j} \circ \rho = \tau_{\Pi(j)}$ for all 
    $1\le i \le s$ and $1 \le j \le t$. 
    By the same argument as in equation (\ref{Equ:sigma}), 
    we have $\sigma_1(K)=\sigma_{\Sigma(1)}(K)$. 
    Define an automorphism $\tilde\rho \in \Aut(K/\Q)$ which makes 
    the following diagram commutative: 
    \[
    \begin{tikzcd}
        K \arrow[r,  "\tilde\rho"] \arrow[d, "\sigma_{\Sigma(1)}"'] 
        & K \arrow[d, "\sigma_1"] \\
        \sigma_{\Sigma(1)}(K) \arrow[r,  equal] 
        & \sigma_1(K). 
    \end{tikzcd}
    \]
    For all $u \in U$, 
    we have $\sigma_1(\rho(u)) =\sigma_{\Sigma(1)}(u)=\sigma_1(\tilde\rho(u))$, 
    and hence $\rho(u)=\tilde\rho(u)$. 
    Thus, 
    $\tilde\rho$ is an extension of $\rho$. 
    Since the permutation $\Sigma$ has a finite order, 
    there exists an integer $1\le m$ such that $\sigma_1 \circ \rho^m =\sigma_1$ on $U$. 
    In particular, we have $\rho(U)=U$. 
    Therefore, 
    we have $\tilde\rho(U) = U$ and hence $\tilde\rho \in A_U$. 
    Finally, we verify that $\tilde\rho \cdot T =T'$. 
    We have 
    \[
    \sigma_i(\tilde\rho(u_0))=\sigma_i(\rho(u_0))
    =\sigma_{\Sigma(i)}(u_0), \quad 
    \tau_j(\tilde\rho(u_0))=\tau_j(\rho(u_0))
    =\tau_{\Pi(j)}(u_0)
    \]
    for all 
    $1\le i \le s$ and $1 \le j \le t$. 
    Since $\Q(u_0)=K$, 
    we have 
    \[
    \sigma_i \circ\tilde\rho=\sigma_{\Sigma(i)}, \quad 
    \tau_j \circ \tilde\rho=\tau_{\Pi(j)}
    \]
    on $K$, 
    which completes the proof. 
\end{proof}

\section{Examples}
\label{section:Examples}

\begin{example}\label{Ex:(1,1)}
Let $X(K,U,\tau)$ be an OT manifold of type $(1,1)$. 
Then we have $\lvert \mathcal{T}_{K} / A_U \rvert=2$ as follows. 
The element $\rho \in A_U$ satisfies $\rho(u)=u$ or $u^{-1}$ 
for a generator $u$ of $U$. 
If $\rho(u)=u^{-1}$, 
then the conjugates of $u$ are $u$, $u^{-1}$, and $u'$ for some $u' \in K$. 
Since their product equals $\pm 1$, 
we must have $u' = \pm 1$, 
which is a contradiction. 
Thus, 
we have $A_U=\{ \id \}$. 
Therefore, 
we obtain $\lvert \mathcal{T}_{K} / A_U \rvert=2$. 
\end{example}

It is shown in \cite{Ino75} that an OT manifold of type $(1,1)$, 
viewed as a differentiable manifold, 
admits no complex structures other than these two. 
This naturally suggests the following 

\begin{question*}
    Is the number of complex structures 
    on an OT manifold $X(K,U,T)$ of simple type 
    limited to these $\lvert \mathcal{T}_{K} / A_U \rvert$?
\end{question*}

Since the proof in the case of type $(1,1)$ 
relies on the classification theory of complex surfaces, 
the same approach cannot be applied straightforwardly 
in higher-dimensions. 

\begin{example}\label{Ex:(2,1)}
    Let $K \coloneqq \Q(\sqrt{1+\sqrt{2}}) \simeq \Q[x]/(x^4-2x^2-1)$, 
    and $U \coloneqq \unitp{K}$. 
    $K$ is a number field of signature $(2,1)$ and $(K,U)$ is of simple type. 
    The real embeddings $\sigma_1, \sigma_2 \colon K \hookrightarrow \R$ and 
    the complex embeddings $\sigma_3, \sigma_4 \colon K \hookrightarrow \C$ are 
    defined by 
    \begin{align*}
        \sigma_1 \colon \sqrt{1+\sqrt{2}} \mapsto \sqrt{1+\sqrt{2}} \quad 
        &\sigma_2 \colon \sqrt{1+\sqrt{2}} \mapsto -\sqrt{1+\sqrt{2}} \\
        \sigma_3 \colon \sqrt{1+\sqrt{2}} \mapsto \sqrt{1-\sqrt{2}} \quad
        &\sigma_4 \colon \sqrt{1+\sqrt{2}} \mapsto -\sqrt{1-\sqrt{2}}. 
    \end{align*}
    Take $\rho \in \Aut(K/\Q)$ defined by $\rho =\sigma_2$. 
    Then, 
    we have 
    $\sigma_1 \circ \rho=\sigma_2$, 
    $\sigma_2 \circ \rho=\sigma_1$, 
    $\sigma_3 \circ \rho=\sigma_4$ and 
    $\sigma_4 \circ \rho=\sigma_3$. 
    Since $\rho(\unit{K})=\unit{K}$, 
    we have $\rho \in A_U$. 
    As $\rho$ acts by exchanging $\sigma_3$ and $\sigma_4$, 
    we obtain $\lvert \mathcal{T}_{K} / A_U \rvert=1$. 
\end{example}

\begin{example}\label{Ex:(1,t)}
    For any number field $K$ of signature $(1,t)$ 
    (for example, 
    $K = \Q(\sqrt[m]{2})$ where $m=2t+1$) 
    and any admissible subgroup $U$ such that $(K,U)$ is of simple type, 
    we have $\lvert \mathcal{T}_{K} / A_U \rvert=2^t$. 
    Indeed, 
    the field $K$ can be written as $\Q(\alpha)$ with $\alpha \in \R$. 
    Any automorphism $\rho \in \Aut(K/\Q)$
    must map $\alpha$ to a real conjugate, 
    but $\alpha$ is the only real conjugate. 
    Thus, 
    we have $\rho=\id$. 
\end{example}

\begin{example}\label{Ex:(2s'+1,1)}
    For any number field $K$ of signature $(2s'+1,1)$ 
    and any admissible subgroup $U$, 
    we have $\lvert \mathcal{T}_{K} / A_U \rvert=2$. 
    Indeed, 
    the field $K$ can be written as 
    $\Q(\beta)$ with $\beta \in \C \backslash \R$. 
    Since any automorphism $\rho \in \Aut(K/\Q)$
    must map $\beta$ to $\beta$ or $\conj{\beta}$, 
    we have $\ord \rho=1$ or $2$. 
    As $\ord \rho$ divides $n=2s'+3$, 
    we have $\rho=\id$. 

    The technique to construct an irreducible polynomial 
    with a given signature is explained in \cite[Remark 1.1]{OT05}. 
    We again note that an OT manifold of type 
    $(s,1)$ is of simple type. 
    Moreover, 
    it is proved that 
    an OT manifold $X(K,U,T)$ of type $(s,t)$ 
    admits an LCK metric if and only if $t=1$ 
    (see \cite{DV23}, \cite{Kan25}). 
\end{example}

\bibliographystyle{amsalpha}
\bibliography{OT}
\end{document}